\documentclass[10pt,reqno]{amsart}
 \linespread{1.1}
\usepackage{palatino,lipsum}
\usepackage[mathlines,pagewise]{lineno}
\usepackage{amsfonts,amsmath} 
\usepackage{amsthm}
\usepackage{graphicx,enumerate,soul}

\usepackage[margin=1in,hmarginratio=1:1]{geometry}
\usepackage[latin1]{inputenc}
\usepackage[all]{xy}

\usepackage{amsmath}
\usepackage{amssymb,mathtools}
\usepackage{stmaryrd}
\usepackage{hyperref}
\usepackage{pdfsync}
\usepackage{color}
\usepackage[T1]{fontenc} 

\makeatletter
\let\start@align@nopar\start@align
\let\start@gather@nopar\start@gather
\let\start@multline@nopar\start@multline
\long\def\start@align{\par\start@align@nopar}
\long\def\start@gather{\par\start@gather@nopar}
\long\def\start@multline{\par\start@multline@nopar}
\makeatother

\newtheorem{counter}{Counter}
\newtheorem{lem}[counter]{Lemma}
\newtheorem{defn}[counter]{Definition}
\newtheorem{thm}[counter]{Theorem}
\newtheorem{remark}[counter]{Remark}


\newcommand{\R}{\mathbb{R}}
 
\renewcommand{\H}{\mathcal{H}} 
\renewcommand{\L}{\mathcal{L}}

\newcommand{\U}{\mathcal{U}} 

\newcommand{\F}{\mathcal{F}}
\newcommand{\V}{\mathcal{V}}

\renewcommand{\lg}{\langle} 
\newcommand{\rg}{\rangle} 
\newcommand{\lra}{\longrightarrow} 

\newcommand{\ra}{\rightarrow} 

\newcommand{\red}{\color{black}}

 \newcommand{\sse}{\subseteq}

\renewcommand{\~}{\tilde}

\newcommand{\rhu}{\rightharpoonup}

\newcommand{\fal}{\forall}
\newcommand{\8}{\infty} 

\newcommand{\vph}{\varphi}
\newcommand{\vep}{\varepsilon} 
 
\newcommand{\dt}{\delta} 
\newcommand{\al}{\alpha}

 \newcommand{\ggm}{\Gamma}

 \newcommand{\vth}{\vartheta}

\newcommand{\disp}{\displaystyle}


\DeclareMathOperator{\dd}{\operatorname{d}\!}

\DeclareMathOperator{\loc}{{\operatorname{loc}}}
\DeclareMathOperator{\per}{{\operatorname{per}}}

{   \end{list} }
\newcommand{\magenta}{\color{black}}
\newcommand{\blue}{\color{black}}
\definecolor{mygreen}{rgb}{0.1,0.75,0.2}


\title[Epitaxial Growth]{Regularity in time for weak solutions of a continuum model
for epitaxial growth with elasticity on vicinal surfaces}

\author{Irene Fonseca, Giovanni Leoni and Xin Yang Lu}


\date{}

\begin{document}

\begin{abstract}
{
The evolution equation derived by Xiang (SIAM J. Appl. Math. 63:241--258, 2002) 
to describe vicinal surfaces in heteroepitaxial growth}
is
\begin{equation}\label{abs eq}
h_t=-\left[ H(h_x)+\left(h_x^{-1}+h_x  \right) h_{xx}\right]_{xx},
\end{equation}
where $h$ denotes the surface height of the film, and $H$ { is} the Hilbert transform.
{ Existence of solutions was} obtained by Dal Maso,
Fonseca and Leoni (Arch. Rational Mech. Anal. 212: 1037--1064, 2014).
{ T}he regularity in time { was left unresolved}. 
The aim of this paper is to prove existence, uniqueness, and Lipschitz regularity in time for
weak solutions, under suitable assumptions on the initial datum.
%
%
%
%
\end{abstract}

\maketitle

{\bf Keywords}: epitaxial growth, vicinal surfaces, evolution equations, Hilbert
transform, monotone operators

\medskip

{\bf AMS Mathematics Subject Classification}: 35K55, 35K67, 44A15, 74K35  

\bigskip

\section{Introduction}

Within the context of heteroepitaxial growth of a film onto a substrate,
terraces and steps self-organize {to accommodate} misfit elasticity forces. Discrete models
have been proposed by Duport, Politi and Villain \cite{DPV}, and Tersoff, Phang, Zhang and
Lagally \cite{TPZL}. A continuum variant of these models has been derived
by Xiang \cite{X}. Also related are the works by Xiang and E \cite{XE},
 and Xu and Xiang \cite{XX}. 
 The evolution equation derived by Xiang \cite[Formula~(3.62)]{X} is
 (upon space inversion)
\begin{equation}\label{eqX}
 h_t=-\left[ H(h_x)+\left(\frac1{h_x}+h_x  \right) h_{xx}\right]_{xx},
 \end{equation}
 where $h$ describes the height of the surface of the film, 
 { and} is assumed { to be} monotone. The time
 domain is $[0,T]$ with $T>0$ a given datum, the space domain is $I:=(-\pi,\pi)$, 
$H$ {denotes} the Hilbert transform, i.e., 
$$H(f)(x):=\frac1{2\pi} PV \int_I \frac{f(x-y)}{\tan (y/2)}\dd y, $$
with $PV$ denoting the Cauchy principal value.
 Analytical validation for the continuum model
from \cite{X} has been {obtained} by Dal Maso, Fonseca and Leoni {in} \cite{DFL}, {where} the authors
transform \eqref{eqX} into a parabolic evolution equation
\begin{equation}\label{origin}
u_t=-\left[H(u_{x})+\Phi_a'(u_{xx}) \right]_{xx},
\end{equation}
$$\Phi_a(\xi):=\Phi(\xi+a),\qquad
\Phi:\R\lra \R\cup \{+\8\},
\quad \Phi(\xi):=
\left\{
\begin{array}{cl}
+\8 & \text{if } \xi<0,\\
0 & \text{if } \xi=0,\\
\xi\log\xi+\xi^3/6 & \text{if } \xi>0.
\end{array}\right.$$
Here $a>0$ is a constant, and $u$ is a suitable antiderivative of $h$.
{ T}he main results {in \cite{DFL} is} the {proof of the} existence of weak solutions for \eqref{origin}
in the sense that:
\begin{enumerate}
\item (\cite[Theorem~1]{DFL}) 
for any $T,a>0$, $u^0\in L_{\per_0}^2(I)$, there exists $u\in L^3(0,T;W_{\per_0}^{2,3}(I))$
such that
\begin{align*}
\int_0^T \int_I \big[w_t(t)(w(t)-u(t)) &- H(u_{xx}(t))(w_x(t)-u_x(t)) \\
&+
\Phi_a(w_{xx}(t)) -\Phi_a(u_{xx}(t))\big] \dd x \dd t \geq 0
\end{align*}
for any test function $w\in L^3(0,T;W_{\per_0}^{2,3}(I))$
such that $w_t\in L^{3/2}(0,T;(W_{\per_0}^{2,3}(I))')$ and $w(0)=u^0$. 
Moreover{,} $\log(u_{xx}+a)\in L^1(0,T;L^1(I))$;

\item (\cite[Theorem~2]{DFL}) assuming, in addition, that test functions $w$ satisfy
$\log(w_{xx}+a)\in L^{3/2}(0,T;L^{3/2}(I))$, it holds
\begin{align*}
\int_0^T \int_I \big[w_t(t)(w(t)-u(t)) &- H(u_{xx}(t))(w_x(t)-u_x(t)) \\
&+
\Phi_a'(w_{xx}(t))(w_{xx}(t)-u_{xx}(t))\big] \dd x\dd t\leq 0 .
\end{align*}
Here
$$W_{\per_0}^{2,3}(I):=\left\{f\in W_{\loc}^{2,3}(\R):f \text{ is } 2\pi\text{-periodic and }
 \int_I f \dd x=0 \right\},$$
$$L_{\per_0}^{2}(I):=\left\{f\in L_{\loc}^{2}(\R):
f \text{ is } 2\pi\text{-periodic and } \int_I f \dd x=0 \right\}.$$
\end{enumerate}
Note in that both results, the regularity in time
  is assumed on the test function $w$. Concerning the regularity in time
 of $u$, it {was} only { proved} (\cite[Remark~3]{DFL}) that $u$
 has finite essential pointwise variation when considered as function
 $u:[0,T]\lra (W_{\per_0}^{2,\8}(I))'$, where
 $$W_{\per_0}^{2,\8}(I):=\left\{f\in W_{\loc}^{2,\8}(\R):f \text{ is } 2\pi\text{-periodic and }
 \int_I f \dd x=0 \right\}.$$
 The main result of this paper is: 
\begin{thm}\label{main}
Given $T,a>0$, {and} $u^0\in W^{2,2}_{\per_0}(I)$ such that 
\begin{equation}\label{id}
{\int_I z^0v \dd x} - \int_I H(u^0_{xx}) v_x \dd x + \int_I [\Phi_a(v_{xx})-\Phi_a(u_{xx}^0)] \dd x\geq 0
\end{equation}
{for some $z^0\in { L_{\per_0}^2(I)}$ and any $v\in { W_{\per_0}^{2,3}(I)}$, then} there exists a solution
$u:[0,T]\lra W_{\per_0}^{2,3}(I)$ of \eqref{origin} in the sense that
\begin{equation}\label{3*}
\int_0^T \int_I u_t(t) \vph(t,x) \dd x \dd t =
\int_0^T \int_I \big[H(u_{xx}(t)) \vph_x(t,x)- \Phi_a'(u_{xx}(t))\vph_{xx}(t,x)\big] \dd x \dd t 
\end{equation}
for any $\vph\in C_c^\8((0,T)\times I;\R)$. Moreover{,} 
$$u\in L^\8(0,T;{\magenta W_{\per_0}^{2,3}(I)})\cap 
C^0([0,T];{\magenta L_{\per_0}^2(I)}), \qquad u_t\in L^\8(0,T;{\magenta L_{\per_0}^2(I)}),\qquad
u(0)=u^0.$$
\end{thm}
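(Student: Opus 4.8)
The plan is to construct the solution by a time-discretization (minimizing-movements / implicit Euler) scheme and then pass to the limit, using the extra regularity hypothesis \eqref{id} on the initial datum to upgrade the estimates known from \cite{DFL}. Fix a time step $\tau = T/N$ and, starting from $u^0$, define $u^{k+1}$ as the minimizer over $W_{\per_0}^{2,3}(I)$ of
\[
\mathcal{E}_\tau(w) := \frac{1}{2\tau}\|w-u^k\|_{(W_{\per_0}^{2,3})'}^2 + \int_I \Phi_a(w_{xx})\d x + \frac12\int_I |H(w_x)\,?|\,\ldots
\]
more precisely the functional whose Euler--Lagrange inequality is exactly the incremental version of \eqref{origin}, written with the $(W^{2,3}_{\per_0})'$ or $L^2_{\per_0}$ inner product used to realize $u_t$; existence of a minimizer follows from convexity of $\Phi_a$, weak lower semicontinuity, and coercivity (the $\Phi_a$-term controls $\|w_{xx}\|_{L^3}$ from below since $\xi^3/6$ dominates). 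The key point is that testing the incremental inequality with $w = u^k$ and using \eqref{id} at the first step gives a discrete energy--dissipation inequality that, crucially, also yields a uniform bound on the discrete time derivative in $L^2(0,T;L^2_{\per_0}(I))$: this is where hypothesis \eqref{id} enters, because it is precisely the statement that $u^0$ is (up to the element $z^0 \in L^2_{\per_0}$) a subsolution in the variational sense, so $z^0$ plays the role of an admissible initial value for $u_t$, letting one run the standard ``difference-quotient in time'' argument for convex-gradient-flow-type equations.

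Concretely, the main estimates to extract, uniformly in $\tau$, are: (i) $\sup_k \int_I \Phi_a((u^k)_{xx})\d x \le C$, hence $\sup_k \|(u^k)_{xx}\|_{L^3(I)} \le C$ and $\sup_k \|u^k\|_{W^{2,3}_{\per_0}} \le C$ by Poincar\'e; (ii) $\sum_k \tau \big\|(u^{k+1}-u^k)/\tau\big\|_{L^2(I)}^2 \le C$, obtained by testing the incremental relation with $u^{k+1}-u^k$ and telescoping, again using \eqref{id} to bound the boundary term at $k=0$; (iii) from (ii), $\sup_k \|(u^{k+1}-u^k)/\tau\|_{L^2(I)} \le C$ by a discrete Gr\"onwall / monotonicity argument exploiting that the spatial operator $w \mapsto -[H(w_x)+\Phi_a'(w_{xx})]_{xx}$ is monotone (differences of the equation at consecutive steps, paired with $u^{k+1}-u^k$, kill the monotone part). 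Interpolating the piecewise-affine and piecewise-constant interpolants $\hat u_\tau$, $\bar u_\tau$ and using Aubin--Lions, we get $\hat u_\tau \to u$ strongly in $C^0([0,T];L^2_{\per_0}(I))$ and weakly-$*$ in $L^\infty(0,T;W^{2,3}_{\per_0}(I))$, with $\partial_t \hat u_\tau \wra u_t$ in $L^2(0,T;L^2_{\per_0}(I))$; in particular $u \in L^\infty(0,T;W^{2,3}_{\per_0})\cap C^0([0,T];L^2_{\per_0})$, $u_t \in L^\infty(0,T;L^2_{\per_0})$, $u(0)=u^0$, which are exactly the claimed regularities (Lipschitz in time into $L^2_{\per_0}$).

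To pass to the limit in the equation itself, i.e.\ to obtain \eqref{3*} for every $\vph \in C_c^\infty((0,T)\times I)$, the only nonlinear term is $\Phi_a'((u^k)_{xx}) = \log((u^k)_{xx}+a) + 1 + (u^k)_{xx}^2/2$. The quadratic part converges by the strong $L^2$ (indeed $W^{2,3}$ weak plus compactness) convergence of $u_{xx}$; the logarithmic part is the delicate one: one needs $(u_\tau)_{xx} + a$ bounded away from $0$ in a suitable averaged sense, or else a Minty-type monotonicity trick. Here I would follow \cite{DFL}: the bound $\int_I \Phi_a((u^k)_{xx})\d x \le C$ forces $\log((u_\tau)_{xx}+a)$ to be bounded in $L^1(0,T;L^1(I))$, and combined with the convexity of $\Phi_a$ one passes to the limit using monotonicity of $\xi \mapsto \Phi_a'(\xi)$ exactly as in the proof of \cite[Theorem~1]{DFL}, testing with $w = u \pm \lambda\vph$ and sending $\lambda \to 0$ after the limit in $\tau$; the improved time regularity means $u_t$ is a genuine $L^2$ function rather than merely a measure-type object, so the weak formulation \eqref{3*} (with $u_t$ paired against $\vph$) is legitimate. \textbf{The main obstacle} I anticipate is step (iii): upgrading the $L^2(0,T;L^2)$ bound on $\partial_t\hat u_\tau$ to an $L^\infty(0,T;L^2)$ bound. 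Naively this requires testing the \emph{time-differentiated} incremental equation with $(u^{k+1}-u^k)$, which is not obviously admissible because it needs $\Phi_a'$ to be differentiated — and $\Phi_a''(\xi) = 1/\xi + \xi$ blows up as $(u^k)_{xx}\to 0$. The fix is to use only monotonicity (not a second derivative): $\langle \Phi_a'(b)-\Phi_a'(a), b-a\rangle \ge 0$ pointwise, so pairing the difference of two consecutive incremental relations with $u^{k+1}-u^k$ gives $\|v^{k+1}\|_{L^2}^2 \le \langle v^{k+1},v^k\rangle$ where $v^k := (u^{k+1}-u^k)$... wait, more carefully one gets $\|(u^{k+1}-u^k)/\tau\|_{L^2} \le \|(u^k-u^{k-1})/\tau\|_{L^2}$ by Cauchy--Schwarz, so the discrete velocity is nonincreasing in $L^2$-norm, and the bound at $k=0$ is controlled by $\|z^0\|_{L^2}$ via \eqref{id}. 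Making this monotonicity argument rigorous at the discrete level — in particular checking that the Hilbert-transform term $H(\cdot_x)_{xx}$ also contributes a nonnegative quantity when paired with $u^{k+1}-u^k$ (it does, since $H$ is a positive operator on mean-zero periodic functions: $\int_I H(f_x) f \,\d x \ge 0$) — is the technical heart of the paper and the step I would expect to occupy the most space.
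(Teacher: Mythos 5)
Your approach is Rothe's method (implicit Euler / minimizing movements), which is exactly the machinery underlying Ka\v{c}ur's theorem that the paper invokes as a black box (Theorem \ref{ru}), so the two proofs are morally cousins. However, the ``technical heart'' you flag at the end is where your argument has a genuine sign error, and it is precisely the obstruction the paper spends its first two lemmas circumventing.

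You claim that the Hilbert-transform part of the operator ``contributes a nonnegative quantity when paired with $u^{k+1}-u^k$'' and that this is what makes the monotonicity argument close. Both halves of this claim need to be examined. Write $w:=u^{k+1}-u^k$ and pair the time-differenced incremental relation against $w$: the right-hand side becomes
\begin{equation*}
-\int_I H(w_x)\,w_{xx}\,\dd x \;-\;\int_I \big[\Phi_a'(u_{xx}^{k+1})-\Phi_a'(u_{xx}^k)\big](u_{xx}^{k+1}-u_{xx}^k)\,\dd x .
\end{equation*}
By Fourier analysis, $-\int_I H(w_x)w_{xx}\,\dd x = \int_I H(w_{xx}) w_x\,\dd x = 2\pi\sum_k |k|^3 |\hat w_k|^2 \ge 0$, so you are right that this term is nonnegative. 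But you need the entire right-hand side to be \emph{nonpositive} in order to conclude $\|v^{k+1}\|_{L^2}\le\|v^k\|_{L^2}$, and the Hilbert term therefore pushes in the \emph{wrong} direction. Mere monotonicity of $\Phi_a'$ (which is all you invoke: ``$\langle \Phi_a'(b)-\Phi_a'(a),b-a\rangle\ge 0$'') gives the second term $\le 0$ but provides no quantitative margin to absorb the first. This is exactly why the paper observes that $-\H$ is \emph{not} maximal monotone and works instead with $B := -\H + \partial_x^4$ (in weak form $\lg Bu,v\rg_{V',V}=\int u_{xx}v_{xx}-H(u_{xx})v_x$) and the shifted potential $\Psi_a(\xi)=\Phi_a(\xi)-\xi^2/2$. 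The two key quantitative facts making this work, which your sketch never uses, are the Poincar\'e/isometry bound $|\lg\H u,u\rg_{V',V}|\le \|H(u_{xx})\|_{L^2}\|u_x\|_{L^2}\le\|u_{xx}\|_{L^2}^2$ (Lemma \ref{pm}, inequality \eqref{arma}), and the lower bound $\Phi_a''(\xi)=(\xi+a)+1/(\xi+a)\ge 2$, so that $\Psi_a''\ge 1$ and $\Psi_a$ remains convex. Without one of these (or the equivalent ``strong monotonicity beats the Hilbert term'' computation), your step (iii) does not close. As written, your argument relies on an incorrect reading of the sign of the Hilbert contribution.

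A secondary, smaller gap: in the passage ``testing with $w=u\pm\lambda\vph$ and sending $\lambda\to 0$'' you gloss over the fact that $u_{xx}+a\pm\lambda\vph_{xx}$ need not stay nonnegative, so $\Phi_a$ may be $+\infty$ at the competitor and the Euler--Lagrange inequality cannot be linearized directly. This is what forces the paper's Step~2 truncation $\hat u_{xx}^\dt=\max\{u_{xx}+a,\dt\}-a$ with the associated $\zeta(t,\dt)$, $\theta(t,\dt)$ renormalizations and the measurability argument for $\vth^\vep$; it is not a routine Minty step and deserves an actual construction.
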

The { main} argument is to first prove that 
{the }variational inequality \eqref{vi} below admits a solution $u$, {and}
then show that such $u$ is also solution of \eqref{origin} in the sense of Theorem \ref{main}.
We remark that 
there { is a large class of initial data} $u^0$ satisfying \eqref{id}.
Assume { that} $u_{xx}^0+a>0$ a.e., and { that} $\Phi_a'(u_{xx}^{\red0})$, $\Phi_a(u_{xx}^{\red0})\in L^1(I)$.
Then the convexity of $\Phi$ gives
$$\int_I [\Phi_a(v_{xx})-\Phi_a(u_{xx}^0)]\dd x\geq \int_I (v_{xx}-u_{xx}^0)\Phi_a'(u_{xx}^0)\dd x,$$
thus a sufficient condition for \eqref{id} is { that, for some
$z^0\in { L_{\per_0}^2(I)}$ and any $v\in { W_{\per_0}^{2,3}(I)}$,}
$$\int_I z^0 v\dd x - \int_I H(u^0_{xx}) v_x \dd x +\int_I v_{xx}\Phi_a'(u_{xx}^0)\dd x-
\int_I u_{xx}^0 \Phi_a'(u_{xx}^0)  \dd x\geq 0.$$
{ In particular, the previous inequality holds if}
$$\int_I u_{xx}^0 \Phi_a'(u_{xx}^0)\dd x\leq 0,\qquad z^0:=[-H(u_x^{0})-\Phi_a'(u_{xx}^{0})]_{xx}{.}$$
{ Observe that} if ${\magenta u}\in C^4(I)$, with derivatives bounded away from $0$,
{ and} extended by periodicity, 
then such a $z^0$ { is well defined}. 

{ To ensure that} $\int_I u_{xx}^0 \Phi_a'(u_{xx}^0)\dd x\leq 0$,
the following { are sufficient conditions:}
\begin{enumerate}
\item if $\Phi_a'(0)\geq 0$, then due to the monotonicity of $\Phi_a'$, there exists a unique 
$b_0\leq 0$ such that $\Phi_a'(b_0)=0$. Thus any $u^0$ with $b_0\leq u^0_{xx}\leq 0 $ is acceptable;

\item similarly, if $\Phi_a'(0)\leq0$, then there exists a unique 
$b_1\geq 0$ such that $\Phi_a'(b_1)=0$. Thus any $u^0$ with $0\leq u^0_{xx}\leq b_1 $ is acceptable.
\end{enumerate}

\section{Proof of { Theorem \ref{main}}}
Let $T>0$ be given, and { let} $I:=(-\pi,\pi)$ be the space domain.
Let 
\begin{equation}\label{space}
V:=W_{\per_0}^{2,3}(I), \quad U:=L^2_{\per_0}(I), \quad \V:=L^2(0,T;V),
\quad \U:=L^2(0,T;U).
\end{equation}
Note that $U$ is an Hilbert space, $V$ is a reflexive Banach space, and the embedding
$V\hookrightarrow U$ is {  compact}. Duality yields the pivot space structures
\begin{equation}\label{7*}
V\hookrightarrow U \hookrightarrow V',\qquad \V\hookrightarrow \U
\hookrightarrow \V'.
\end{equation}
For future reference, {$\lg, \rg$} (resp. $\lg, \rg_{V',V}$) will denote the
duality pairing between {$L^2(I)$ and $L^2(I)$} (resp. $V'$ and $V$).

{\blue
\begin{defn}
An operator $A:V\lra V'$ is:
\begin{enumerate}
\item {\bf monotone} if for any $u,v\in V$, it holds
$$\lg Au-Av,u-v\rg_{V',V} \geq 0. $$
Similarly, a set $G\sse V\times V'$ is {\em ``monotone''} if for any pair $(u,u')$, $(v,v')\in G$,
it holds 
$$\lg u'-v',u-v\rg_{V',V} \geq 0.$$

\item {\bf maximal monotone} if the graph 
$$\ggm_A:=\{(u,Au):u\in V\}\sse V\times V'$$
is not a proper subset of any monotone set.

\item {\bf pseudo-monotone} if it is bounded, and
 $$\lg Au, u-v \rg_{V',V} \leq \liminf_n \lg Au^n,u^n-v\rg_{V',V}$$  
 for every $v\in V$,
 $u^n,u\in V$, satisfying $u^n\rhu u$ and $\limsup_n \lg Au^n,u^n-u\rg_{V',V} \leq 0$.

\item {\bf hemi-continuous} if for any $u,v\in V$ the mapping
$ t\longmapsto \lg A{\red(}u+tv{\red)},v\rg_{V',V} $ is continuous.
\end{enumerate}
\end{defn}

}

{\magenta
\begin{remark}\label{2*}
If an operator $A:V\to V'$ is monotone and hemi-continuous, then it is maximal monotone
(see \cite[Theorem~1.2]{B}).
\end{remark}
}
%
%
{ We will use the following result (see Ka\v{c}ur \cite{Kacur}).}
%
%
\begin{thm}\label{ru}
Let $V$, $U$, $\V$, $\U$ be as defined in \eqref{space}. Let $A:V\lra V'$
be a maximal monotone operator, let
$\phi:V\lra \R\cup\{+\8\}$ { be} a convex,
lower semi-continuous function such that $D(\phi):=\{v\in V:\phi(v)<+\8\}\neq \emptyset$.
Let $u^0\in U${, and} suppose there exist:
\begin{itemize}
\item $v^0\in D(\phi)$ such that 
\begin{equation}\label{coe}
\lim_{\|v\|_V\ra +\8} \frac{\lg Av,v-v^0\rg_{V',V}+\phi(v)}{\|v\|_V}=+\8,\
\end{equation}

\item $z^0\in U$ such that { for any $v\in V$}
\begin{equation}\label{id2}
 \lg z^0,v\rg+\lg Au^0 ,v\rg_{V',V} +\phi(v)-\phi(u^0) \geq 0.
\end{equation}
\end{itemize}
Then there exists a unique $u\in L^\8(0,T;V)\cap C^0([0,T];U)$ such that $u_t\in L^\8(0,T;U)$,
$u(0)=u^0${,} and
$$\lg u_t(t) ,v(t)-u(t)\rg+ \lg A u(t),v(t)-u(t)\rg_{{ V',V}} +\phi(v(t))-\phi(u(t)) \geq 0$$
for a.e. time $t\in (0,T)$, { and all $v\in V$}.
\end{thm}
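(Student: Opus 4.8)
\emph{Strategy.} The plan is to prove this by the Rothe method (implicit Euler semidiscretization in time), which is essentially Ka\v{c}ur's argument \cite{Kacur}. First I would discretize in time and solve each stationary variational inequality; then derive two uniform a priori estimates — boundedness of the iterates in $V$ coming from the coercivity \eqref{coe}, and boundedness of the discrete time derivative in $U$ coming from the compatibility condition \eqref{id2}; then pass to the limit using the compact embedding $V\hookrightarrow\hookrightarrow U$, lower semicontinuity of $\phi$, and a Minty--Browder argument; finally, uniqueness follows from a one-line monotonicity estimate. Concretely: fix $n\in\N$, set $\tau:=T/n$ and $u^n_0:=u^0$, and for $k=1,\dots,n$ let $u^n_k\in V$ solve
$$\tfrac1\tau\lg u^n_k-u^n_{k-1},v-u^n_k\rg+\lg Au^n_k,v-u^n_k\rg_{V',V}+\phi(v)-\phi(u^n_k)\geq0\qquad\text{for all }v\in V,$$
equivalently $\tfrac1\tau(u^n_k-u^n_{k-1})+Au^n_k+\xi^n_k=0$ in $V'$ with $\xi^n_k\in\partial\phi(u^n_k)$. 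Solvability and uniqueness of each step are classical: $v\mapsto\tfrac1\tau(v-u^n_{k-1})+Av$ is the sum of the maximal monotone $A$ and a bounded, everywhere-defined, monotone, hemicontinuous (hence, by Remark \ref{2*}, maximal monotone) operator, so it is maximal monotone; adding $\partial\phi$ (whose domain is nonempty since $D(\phi)\neq\emptyset$) preserves maximal monotonicity, \eqref{coe} makes the sum coercive and hence surjective onto $V'$, and the strict monotonicity of the term $\tfrac1\tau(\cdot)$ in $U$ gives uniqueness.

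The two estimates are the heart of the matter. Writing the discrete inclusions at consecutive steps $k$ and $k-1$ ($2\leq k\leq n$), subtracting, and testing with $u^n_k-u^n_{k-1}\in V$, the monotonicity of $A$ and of $\partial\phi$ makes the two nonlinear contributions nonnegative and yields $\|u^n_k-u^n_{k-1}\|_U\leq\|u^n_{k-1}-u^n_{k-2}\|_U$. Hypothesis \eqref{id2}, read as the ``step-$0$'' relation $-z^0=Au^0+\xi^0$ with $\xi^0\in\partial\phi(u^0)$ that it encodes, lets one run the same comparison once more (against the fictitious $u^n_{-1}:=u^0-\tau z^0$) to get $\|u^n_1-u^0\|_U\leq\tau\|z^0\|_U$. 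Therefore $\|u^n_k-u^n_{k-1}\|_U\leq\tau\|z^0\|_U$ for all $k$, so the piecewise-affine interpolant $\hat u_n$ with $\hat u_n(k\tau)=u^n_k$ is $\|z^0\|_U$-Lipschitz from $[0,T]$ to $U$, uniformly in $n$. Next, testing the step-$k$ inequality with the fixed $v^0$ from \eqref{coe} and using the bound just obtained on $\tfrac1\tau(u^n_k-u^n_{k-1})$ together with $\phi(v^0)<\infty$ gives
$$\lg Au^n_k,u^n_k-v^0\rg_{V',V}+\phi(u^n_k)\leq C\big(1+\|u^n_k\|_V\big)$$
with $C$ independent of $n,k$; since $\lg Au^n_k,u^n_k-v^0\rg_{V',V}\geq\lg Av^0,u^n_k-v^0\rg_{V',V}\geq-C'$ by monotonicity, also $\sup_{n,k}\phi(u^n_k)<\infty$, and the superlinear growth in \eqref{coe} forces $\sup_{n,k}\|u^n_k\|_V<\infty$. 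Thus $\hat u_n$ and the piecewise-constant interpolant $\bar u_n$ (with $\bar u_n\equiv u^n_k$ on $((k-1)\tau,k\tau]$) are bounded in $L^\infty(0,T;V)$, and $\bar u_n(t)\rhu u(t)$ in $V$ for every $t$ (boundedness plus strong $U$-convergence).

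Now Arzel\`a--Ascoli / Aubin--Lions (using $V\hookrightarrow\hookrightarrow U$) gives, along a subsequence, $\hat u_n\to u$ in $C^0([0,T];U)$, $\hat u_n\overset{*}{\rhu}u$ in $L^\infty(0,T;V)$, $\partial_t\hat u_n\overset{*}{\rhu}u_t$ in $L^\infty(0,T;U)$, and $\bar u_n\to u$ in $C^0([0,T];U)$ (as $\|\bar u_n-\hat u_n\|_{L^\infty(0,T;U)}\leq\tau\|z^0\|_U\to0$); in particular $u\in L^\infty(0,T;V)\cap C^0([0,T];U)$, $u_t\in L^\infty(0,T;U)$ and $u(0)=u^0$. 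To recover the variational inequality, multiply the step-$k$ inequality by $\tau\,\eta(k\tau)$ with $0\leq\eta\in C_c(0,T)$, sum in $k$, and let $n\to\infty$: the terms linear in the discretization pass by the above convergences; $-\int_0^T\eta\,\phi(\bar u_n)\,\d t$ has the right $\limsup$ bound by convexity and lower semicontinuity of $\phi$ along $\bar u_n(t)\rhu u(t)$; and the nonlinear term is dealt with by Minty--Browder: using the competitor $u(t)$ (which lies in $V$ for a.e.\ $t$ because $u\in L^\infty(0,T;V)$) one first obtains $\limsup_n\int_0^T\eta\lg A\bar u_n,\bar u_n-u\rg_{V',V}\,\d t\leq0$ — because $\int_0^T\eta\lg\partial_t\hat u_n,u-\bar u_n\rg\,\d t\to0$ (since $\partial_t\hat u_n$ is bounded in $L^\infty(0,T;U)$ and $\bar u_n\to u$ in $L^1(0,T;U)$) and $\limsup_n\int_0^T\eta\,(\phi(u)-\phi(\bar u_n))\,\d t\leq0$ — and then pseudo-monotonicity of the bounded maximal monotone $A$ identifies the weak limit of $A\bar u_n$ as $Au$ and supplies the matching $\liminf$ inequality. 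Varying $\eta$ and invoking the Lebesgue differentiation theorem produces the inequality for a.e.\ $t\in(0,T)$ and all $v\in V$.

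Uniqueness is immediate: if $u,\tilde u$ are two solutions, test the inequality for $u(t)$ at $v=\tilde u(t)$ and that for $\tilde u(t)$ at $v=u(t)$ and add; the $\phi$-terms cancel, $\lg Au-A\tilde u,u-\tilde u\rg_{V',V}\geq0$ by monotonicity, so $\tfrac{d}{dt}\tfrac12\|u-\tilde u\|_U^2\leq0$, and $u(0)=\tilde u(0)$ forces $u\equiv\tilde u$. The main obstacle is the limit passage in $\lg A\bar u_n,\bar u_n\rg_{V',V}$: the energy-type inequality $\limsup_n\int_0^T\eta\lg A\bar u_n,\bar u_n-u\rg_{V',V}\,\d t\leq0$ must be extracted by carefully playing the $A$-term off against the strongly convergent discrete time derivative and the sign-definite $\partial\phi$-term before pseudo-monotonicity can be applied; a secondary delicate point is the first-step estimate, where \eqref{id2} is used and where one must work around the fact that $u^0\in U$ need not belong to $V$ (handled by approximating $u^0$ from within $D(\phi)$).
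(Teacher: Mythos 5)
The paper does not prove Theorem \ref{ru} at all: it is quoted verbatim from Ka\v{c}ur's monograph \cite{Kacur}, and your Rothe semidiscretization is precisely the method of that cited source, so your outline is essentially the intended proof and is sound in structure (discrete solvability by maximal monotonicity plus coercivity, the telescoping estimate $\|u^n_k-u^n_{k-1}\|_U\leq\|u^n_{k-1}-u^n_{k-2}\|_U$, the first-step use of \eqref{id2}, Aubin--Lions, Minty). One step as written is wrong, though easily repaired: the inequality $\lg Av^0,u^n_k-v^0\rg_{V',V}\geq -C'$ is not uniform in $k$ before the $V$-bound is established (it degrades like $-C'(1+\|u^n_k\|_V)$), so you cannot first conclude $\sup_{n,k}\phi(u^n_k)<\infty$; instead apply \eqref{coe} directly to $\lg Au^n_k,u^n_k-v^0\rg_{V',V}+\phi(u^n_k)\leq C(1+\|u^n_k\|_V)$, which immediately forces $\sup_{n,k}\|u^n_k\|_V<\infty$. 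Also note that your worry about $u^0\notin V$ is vacuous here, since \eqref{id2} only makes sense when $u^0\in V$ with $\phi(u^0)<+\infty$.
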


%
%
\begin{lem}
The operator $-\H:V\lra V'$ given by 
\begin{equation}\label{9*}
\lg \H(u),v\rg_{V',V} :=\int_I H(u_{xx})v_x \dd x
\end{equation}
is pseudo-monotone. 
\end{lem}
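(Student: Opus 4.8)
The plan is to verify the two defining properties of a pseudo-monotone operator — boundedness, and the $\liminf$-inequality along weakly convergent sequences — exploiting that $-\H$ is \emph{linear}, so that the whole matter reduces to controlling the cross term $\lg \H(u^n), u^n-u\rg_{V',V}$ along a weakly convergent sequence. This is precisely where the two derivatives falling on $u^n$ inside the Hilbert transform have to be absorbed, and the tool for that is the compactness of the embedding $V=W^{2,3}_{\per_0}(I)\hookrightarrow\hookrightarrow W^{1,3}(I)$ (which refines the compactness of $V\hookrightarrow U$ already recorded above).

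First I would check boundedness. Using the $L^3(I)$-continuity of $H$, H\"older's inequality, and $\|v_x\|_{L^{3/2}(I)}\leq C\|v_x\|_{L^3(I)}\leq C\|v\|_V$, one obtains
$$\left|\lg \H(u), v\rg_{V',V}\right| = \left|\int_I H(u_{xx})\, v_x \dd x\right| \leq \|H(u_{xx})\|_{L^3(I)}\,\|v_x\|_{L^{3/2}(I)} \leq C\,\|u\|_V\,\|v\|_V,$$
hence $\|{-\H}(u)\|_{V'}\leq C\|u\|_V$ and $-\H$ maps bounded sets into bounded sets. Next, the key estimate: let $u^n\rhu u$ in $V$ and put $w^n:=u^n-u\rhu 0$ in $V$. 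Since $\{u^n\}$ is bounded in $V$, $\{u^n_{xx}\}$ is bounded in $L^3(I)$, and so $\{H(u^n_{xx})\}$ is bounded in $L^3(I)$. By Rellich--Kondrachov the embedding $W^{2,3}(I)\hookrightarrow\hookrightarrow W^{1,3}(I)$ is compact, hence $w^n\to 0$ strongly in $W^{1,3}(I)$, in particular $w^n_x\to 0$ in $L^3(I)\subset L^{3/2}(I)$. Therefore, by H\"older,
$$\left|\lg \H(u^n), w^n\rg_{V',V}\right| = \left|\int_I H(u^n_{xx})\, w^n_x \dd x\right| \leq \|H(u^n_{xx})\|_{L^3(I)}\,\|w^n_x\|_{L^{3/2}(I)} \longrightarrow 0.$$
(Alternatively one may first use the anti-self-adjointness of $H$ to rewrite $\int_I H(u^n_{xx})\,w^n_x\dd x=-\int_I u^n_{xx}\,H(w^n_x)\dd x$ and argue in the same way; either route hinges on the compact embedding.)

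Finally I would conclude. Fix $v\in V$. Splitting
$$\lg {-\H}(u^n), u^n-v\rg_{V',V} = \lg {-\H}(u^n), u^n-u\rg_{V',V} + \lg {-\H}(u^n), u-v\rg_{V',V},$$
the first term tends to $0$ by the key estimate, while the second converges to $\lg {-\H}(u), u-v\rg_{V',V}$ because the bounded linear operator $-\H:V\to V'$ is weak-to-weak continuous, so $-\H(u^n)\rhu -\H(u)$ in $V'$. Thus $\lg {-\H}(u^n), u^n-v\rg_{V',V}\to \lg {-\H}(u), u-v\rg_{V',V}$ for every $v\in V$; in particular the hypothesis $\limsup_n\lg {-\H}(u^n), u^n-u\rg_{V',V}\leq 0$ is automatically satisfied (take $v=u$), and $\lg {-\H}(u), u-v\rg_{V',V}\leq \liminf_n\lg {-\H}(u^n), u^n-v\rg_{V',V}$ holds — in fact with equality — for all $v\in V$. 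Together with boundedness, this shows that $-\H$ is pseudo-monotone. The one genuinely delicate step is the middle one: weak convergence in $V$ by itself only bounds the cross term by $\|u^n_{xx}\|_{L^3(I)}\|w^n_x\|_{L^3(I)}$, which need not vanish, and it is exactly the compactness $V\hookrightarrow\hookrightarrow W^{1,3}(I)$ that forces $w^n_x\to 0$ strongly and makes the cross term disappear.
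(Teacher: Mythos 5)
Your proof is correct and follows essentially the same approach as the paper: boundedness via Hölder and the $L^3$-continuity of $H$, then a decomposition that isolates the cross term $\lg \H(u^n),u^n-u\rg_{V',V}$ and kills it using the compact embedding $W^{2,3}(I)\hookrightarrow\hookrightarrow W^{1,3}(I)$. The only cosmetic difference is that you dispatch the remaining term $\lg \H(u^n),u-v\rg_{V',V}$ by the weak-to-weak continuity of the bounded linear operator $\H$, whereas the paper instead treats the analogous term $\lg \H(u-u^n),v-u\rg_{V',V}$ with another (logically unnecessary) compactness argument; both arrive at the same conclusion that $\lg -\H(u^n),u^n-v\rg_{V',V}\to\lg -\H(u),u-v\rg_{V',V}$ for every $v\in V$, so the pseudo-monotonicity inequality in fact holds with equality.
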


\begin{proof}
{ To prove that $\H$ is bounded,} given $v\in V$, we observe that
\begin{equation}\label{9**}
\left| \lg \H(u), v\rg_{V',V} \right| = \left| \int_I H(u_{xx})v_x \dd x \right|  \leq
\|H(u_{xx})\|_{L^{3}(I)} \|v_x\|_{L^{3/2}(I)}
\leq c \|u \|_V \|v\|_V,
\end{equation}
where $c$ is a positive constant, thus
$\|\H(u)\|_{V'}\leq c\|u\|_V$.

\bigskip

Consider { $u^n$, $u\in V$ such that} $u^n\rhu u$ { and}
$\limsup_n - \lg \H(u^n), u^n-u\rg_{V',V} \leq 0$.
We need to check { that}
$\lg \H(u), v-u\rg_{V',V}\leq   \liminf_n \lg \H(u^n), v-u^n\rg_{V',V}$ { for all $v\in V$}.
Note that
\begin{align*}
(\fal n) \qquad\lg \H(u), v-u\rg_{V',V} & = \lg \H(u-u^n), v-u\rg_{V',V} + \lg \H(u^n), v-u\rg_{V',V} \\
& =\lg \H(u-u^n), v-u\rg_{V',V} + \lg \H(u^n), v-u^n\rg_{V',V}
+\lg \H(u^n), u^n-u\rg_{V',V},
\end{align*}
where $\lim_n \lg \H(u-u^n), v-u\rg_{V',V} =0 $. 
{ Indeed, since} $u^n\rhu u$ in $V$ (hence in $W^{2,3}(I)$), and the embedding $W^{2,3}(I)\hookrightarrow
W^{1,3}(I)$ is compact, { we have that} $u^n\to u$ in $W^{1,3}(I)$, { and in turn}
\begin{align*}
|\lg \H(u-u^n), v-u\rg_{V',V}| & = \left|\int_I H(u-u^n)_{xx} (v-u)_x \dd x\right| = 
\left|\int_I H(u-u^n)_{x} (v-u)_{xx} \dd x\right|\\ 
& \leq \|H(u-u^n)_x\|_{L^3(I)} \|(v-u)_{xx} \|_{L^{3/2}(I)} \\
&\leq c\|(u-u^n)_x\|_{L^3(I)}
 \|(v-u)_{xx} \|_{L^{3/2}(I)}\to 0,
\end{align*}
for some constant $c>0$.
Moreover{,} $u^n\rhu u$ in $V$ implies { that} $u_x^n\rhu u_x$ in $W^{1,3}(I)$,
and the embedding
$i:W^{1,3}(I) \hookrightarrow C^0([-\pi,\pi];\R)$ (endowed with the $\sup$ norm) is compact.
Hence $u_x^n\rhu u_x$ in $W^{1,3}(I)$ implies 
 $\|u_x^n-u_x\|_{L^\8(I)}\ra 0$, and
$$\lim_n\lg \H(u^n), u^n-u\rg_{V',V} = \lim_n \int_I H(u^n_{xx})(u_x^n-u_x) =0$$
since $\{u^n\}$ is bounded in $V$, and this concludes {\magenta the proof}.
\end{proof}

{{\magenta Note, however, that} the operator $-\H$ is not maximal monotone. To circumvent this {\magenta
difficulty},
let
\begin{align*}
B:V\lra V',&\qquad \lg Bu,v\rg_{V',V}:=\int_I \big[u_{xx}v_{xx}-H(u_{xx})v_x\big]\dd x,\\
\Psi_a:\R\lra (-\8+\8],&\qquad \Psi_a(\xi):=\Phi_a(\xi)-\xi^2/2,\\
\psi:V\to (-\8+\8],&\qquad \psi(u):=\int_I \Psi_a(u_{xx})\dd x.
\end{align*}
Since 
$$\Psi_a''(\xi)=\xi+a+\frac1{\xi+a}-1\geq 1 $$
for any $\xi>-a$, $\Psi_a$ is convex on $(-a,+\8)$. Consequently $\psi$ is convex.

{\magenta We will use the following properties of the Hilbert transform.}

\begin{enumerate}
\item \cite[Theorem~9.1.3]{BN}
The Hilbert transform $H:L_{\per}^p(I)\lra L_{\per}^p(I)$ is a well-defined,
 linear, bounded operator for any $p\in (1,+\8)$,
where $L_{\per}^p(I):=\{f\in L^p(I):f \text{ is } 2\pi\text{-periodic}\}$.

\item \cite[Theorem~9.1.9]{BN} The Hilbert transform $H:L_{\per}^2(I)\lra L^2_{\per}(I)$ satisfies
$$\|f\|_{L^2(I)}^2=\|H(f)\|_{L^2(I)}^2+\frac1{2\pi}\left(\int_I f\dd x \right)^2$$
for any $f\in L_{\per}^2(I)$.

\item {\magenta Also, we will use the sharp Poincar\'e constant
 for $f\in W^{1,2}_{\per_0}(I)$. To be precise (see \cite[Section~7.7]{HLP}): if} $f\in W^{1,2}_{\per_0}(I)$ then
\begin{equation}\label{PW}
\int_I f^2 \dd x\leq \int_I f_x^2 \dd x,
\end{equation}
where equality holds if and only if $f(\xi)=a\sin \xi+b\cos \xi$ a.e., for some $a,b\in \R$.
\end{enumerate}

\begin{lem}\label{pm}
The operator $B:V\lra V'$ is maximal monotone and coercive.
\end{lem}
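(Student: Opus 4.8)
The plan is to verify separately that $B$ is monotone, that it is hemi-continuous, and that it is coercive; maximal monotonicity then follows immediately from Remark \ref{2*}. For monotonicity, given $u,v\in V$ write $w:=u-v\in V$ and compute
$$\lg Bu-Bv,u-v\rg_{V',V}=\int_I w_{xx}^2\dd x-\int_I H(w_{xx})w_x\dd x.$$
The key observation is that the zeroth Fourier mode plays no role: since $w\in W^{2,3}_{\per_0}(I)$, we have $\int_I w_{xx}\dd x=0$, so the $L^2$-isometry property (property (2) of the Hilbert transform listed above) gives $\|H(w_{xx})\|_{L^2(I)}=\|w_{xx}\|_{L^2(I)}$. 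For the cross term, I would integrate by parts to move a derivative, use the commutation of $H$ with differentiation, and then Cauchy--Schwarz, to bound $\big|\int_I H(w_{xx})w_x\dd x\big|=\big|\int_I H(w_{xx})\,w_x\dd x\big|\le \|H(w_{xx})\|_{L^2(I)}\|w_x\|_{L^2(I)}=\|w_{xx}\|_{L^2(I)}\|w_x\|_{L^2(I)}$, and finally invoke the sharp Poincar\'e inequality \eqref{PW} applied to $w_x\in W^{1,2}_{\per_0}(I)$ (noting $\int_I w_x\dd x=0$ by periodicity) to get $\|w_x\|_{L^2(I)}\le\|w_{xx}\|_{L^2(I)}$. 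Combining, $\lg Bu-Bv,u-v\rg_{V',V}\ge \|w_{xx}\|_{L^2(I)}^2-\|w_{xx}\|_{L^2(I)}^2=0$, so $B$ is monotone.

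For hemi-continuity, fix $u,v\in V$ and note that $\lg B(u+tv),v\rg_{V',V}=\int_I[(u_{xx}+tv_{xx})v_{xx}-H(u_{xx}+tv_{xx})v_x]\dd x$ is affine in $t$ (using linearity of $H$), hence certainly continuous in $t$. Thus $B$ is monotone and hemi-continuous, and Remark \ref{2*} yields that $B$ is maximal monotone.

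For coercivity, I would show $\lg Bu,u\rg_{V',V}/\|u\|_V\to+\infty$ as $\|u\|_V\to\infty$. The same computation as above with $w=u$ gives $\lg Bu,u\rg_{V',V}=\|u_{xx}\|_{L^2(I)}^2-\int_I H(u_{xx})u_x\dd x\ge \|u_{xx}\|_{L^2(I)}^2-\|u_{xx}\|_{L^2(I)}\|u_x\|_{L^2(I)}$. Here the subtlety is that \eqref{PW} gives equality exactly on the span of $\sin$ and $\cos$, so the bare estimate only yields $\lg Bu,u\rg_{V',V}\ge 0$, which is not enough for \eqref{coe}. To get a strictly positive lower bound one must exploit the strict inequality in Poincar\'e away from that two-dimensional subspace, or better, observe via Fourier series that if $u_{xx}=\sum_{k\ne 0}c_k e^{ikx}$ then $\|u_{xx}\|_{L^2}^2=2\pi\sum_{k}k^4|\hat u_k|^2$, $\|u_x\|_{L^2}^2=2\pi\sum_k k^2|\hat u_k|^2$, and $\int_I H(u_{xx})u_x\dd x$ contributes $2\pi\sum_k \operatorname{sgn}(k)\,k^3|\hat u_k|^2\le 2\pi\sum_k|k|^3|\hat u_k|^2$, so that
$$\lg Bu,u\rg_{V',V}\ge 2\pi\sum_{|k|\ge 1}(k^4-|k|^3)|\hat u_k|^2\ge 2\pi\sum_{|k|\ge 2}\tfrac{1}{2}k^4|\hat u_k|^2+0\cdot(\text{terms with }|k|=1),$$
which controls $\|u_{xx}\|_{L^2}^2$ up to the finite-dimensional $|k|=1$ part; combined with the equivalence of $\|\cdot\|_V$ with the $W^{2,3}$-norm and the fact that on the finite-dimensional piece all norms are comparable, one concludes $\lg Bu,u\rg_{V',V}\ge c\|u\|_{W^{2,2}}^2-C$ and hence, since $\|u\|_V^2\lesssim$ is controlled appropriately through the embeddings and interpolation on $I$, the ratio in \eqref{coe} diverges. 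The main obstacle is precisely this last point: the cross term can fully cancel the leading term on the resonant Fourier mode $|k|=1$, so coercivity is genuinely borderline and requires separating out that finite-dimensional obstruction rather than a one-line Cauchy--Schwarz argument.
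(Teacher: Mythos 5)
Your monotonicity and hemi-continuity arguments reproduce the paper's proof exactly: the $L^2$-isometry of $H$ on zero-mean functions together with the sharp Poincar\'e inequality \eqref{PW} gives $|\lg\H w,w\rg_{V',V}|\leq\|H(w_{xx})\|_{L^2(I)}\|w_x\|_{L^2(I)}\leq\|w_{xx}\|_{L^2(I)}^2$ with $w=u-v$, hence $\lg Bu-Bv,u-v\rg_{V',V}\geq 0$; hemi-continuity is immediate from linearity of $H$; and Remark \ref{2*} then gives maximal monotonicity.

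Where the proposal goes wrong is the final step. Your Fourier expansion correctly identifies the obstruction, but you then draw a false conclusion from it. Writing $u_{xx}=-\sum_{k\geq 1}k^2(a_k\cos kx+b_k\sin kx)$, one finds $\lg Bu,u\rg_{V',V}=\pi\sum_{k\geq 1}k^3(k-1)(a_k^2+b_k^2)$, which vanishes identically on the two-dimensional subspace $\Span\{\sin x,\cos x\}\subset V$. That subspace is invariant under scaling, so there is no inequality of the form $\lg Bu,u\rg_{V',V}\geq c\|u\|_{W^{2,2}}^2-C$ with $c>0$ and $C<\infty$: take $u=\lambda\cos x$ and send $\lambda\to\infty$. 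The operator $B$ is only nonnegative, not coercive. In fact, the paper's own proof of this lemma never addresses coercivity; the word ``coercive'' in the statement must be read in the sense of hypothesis \eqref{coe} of Theorem \ref{ru}, which concerns the \emph{combination} $\lg Av,v-v^0\rg_{V',V}+\phi(v)$, and that is what Lemma \ref{coercive} supplies: along any sequence $\|v\|_V\to\infty$ with $v_{xx}+a\geq 0$, the cubic growth of $\F_a$ (or $\psi$) overwhelms the at-most-quadratic contribution of $\H$ and the quadratic part of $B$. If you are looking for a standalone coercivity statement for $B$, no argument will produce one.
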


\begin{proof}
By construction $B$ is hemi-continuous. To prove monotonicity,
note that
\begin{equation}\label{arma}
|\lg \H u,u\rg_{V',V}|\leq \|H(u_{xx})\|_{L^2(I)}\|u_{x}\|_{L^2(I)}\leq \|u_{xx}\|_{L^2(I)}^2, 
\end{equation}
 since \cite[Proposition~9.1.9]{BN} and 
$\int_I u_{xx}\dd x=0$ give
 $$\|H(u_{xx})\|_{L^2(I)}=\|u_{xx}\|_{L^2(I)}+\frac1{2\pi}\left(\int_I u_{xx}\dd x\right)^2=\|u_{xx}\|_{L^2(I)},$$
while $\|u_{x}\|_{L^2(I)}\leq \|u_{xx}\|_{L^2(I)}$ holds in view of
   \eqref{PW}.
Thus $B$ is monotone and hemi-continuous, hence maximal monotone {\magenta(see Remark \ref{2*})}.
\end{proof}
}

\begin{lem}\label{coercive}
The functionals ${\disp{\F_a(v):=\int_I \Phi_a(v_{xx})\dd x}}$ and ${\psi}$ satisfy
 the coercivity conditions
\begin{equation}\label{coF}
{\lim_{\|v\|_V\ra +\8} \frac{-\lg \H(v),v\rg_{V',V}+\psi(v)}{\|v\|_V}=}
\lim_{\|v\|_V\ra +\8} \frac{-\lg \H(v),v\rg_{V',V}+{\F_a}(v)}{\|v\|_V}=+\8
\end{equation}
\end{lem}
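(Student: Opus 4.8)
The two limits in \eqref{coF} differ only by the term $\psi(v)$ versus $\F_a(v)$, and since $\F_a(v)=\psi(v)+\tfrac12\int_I v_{xx}^2\,\dd x\geq\psi(v)$, it suffices to establish the lower bound for the $\psi$ version; the $\F_a$ version then follows a fortiori (indeed $\F_a\geq\psi$ pointwise on $V$, as $\int_I v_{xx}^2\,\dd x\geq 0$). So the plan is to show
\[
-\lg \H(v),v\rg_{V',V}+\psi(v)\;\geq\; c_1\|v\|_V^{3}-c_2\|v\|_V^{2}-c_3
\]
for constants $c_i>0$ independent of $v$, and with the convention that the left-hand side is $+\8$ whenever $v_{xx}+a$ fails to be a.e. positive (in which case there is nothing to prove). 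Dividing by $\|v\|_V$ and letting $\|v\|_V\to+\8$ then gives the claim, since the leading power is cubic.

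\textbf{Key steps.} First I would bound the Hilbert-transform term: by \eqref{arma} in the proof of Lemma \ref{pm}, $|\lg\H(v),v\rg_{V',V}|\leq\|v_{xx}\|_{L^2(I)}^2$, and since the embedding $V\hookrightarrow W^{2,2}_{\per_0}(I)$ is continuous (because $\|v_{xx}\|_{L^2(I)}\leq c\,\|v_{xx}\|_{L^3(I)}$ on the bounded interval $I$, and $\|v\|_{W^{2,2}}$ is controlled by $\|v_{xx}\|_{L^2}$ via \eqref{PW}), we get $|\lg\H(v),v\rg_{V',V}|\leq c\,\|v\|_V^{2}$. Hence $-\lg\H(v),v\rg_{V',V}\geq -c\,\|v\|_V^{2}$, a lower-order term. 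Second, I would bound $\psi(v)=\int_I\Psi_a(v_{xx})\,\dd x$ from below. Since $\Psi_a(\xi)=\xi\log\xi'+\xi'^3/6-\xi^2/2$ with $\xi'=\xi+a$ (writing out $\Phi_a$), the dominant contribution for large $|\xi|$ is the cubic $\xi'^3/6$, and one has a pointwise inequality of the form $\Psi_a(\xi)\geq \tfrac1{12}|\xi|^3 - C$ for some constant $C=C(a)$ (the logarithmic and quadratic terms being absorbed for $|\xi|$ large, and $\Psi_a$ being bounded below on any bounded $\xi$-range where it is finite). Integrating, $\psi(v)\geq \tfrac1{12}\|v_{xx}\|_{L^3(I)}^3 - 2\pi C$. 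Third, since $\|v\|_V$ is equivalent to $\|v_{xx}\|_{L^3(I)}$ on $V$ (again by \eqref{PW} applied to $v$ and to $v_x$, together with the Poincar\'e-type control of $\|v_x\|_{L^3}$, $\|v\|_{L^3}$ by $\|v_{xx}\|_{L^3}$ for zero-mean periodic functions), there is $c_1>0$ with $\psi(v)\geq c_1\|v\|_V^3 - c_3$. Combining the two bounds yields the displayed estimate, and dividing by $\|v\|_V$ finishes the proof.

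\textbf{Main obstacle.} The only delicate point is the pointwise lower bound $\Psi_a(\xi)\geq\tfrac1{12}|\xi|^3-C$: one must check that the term $\xi\log(\xi+a)$, which is negative and logarithmically large for $\xi$ slightly above $-a$ and behaves like $\xi\log\xi$ for large $\xi>0$, does not spoil the cubic lower bound. This is handled by splitting into the regime $\xi+a\leq M$ (where $\Phi_a(\xi)$ is finite and continuous on $[-a,\dots]$—note $\Phi_a(\xi)\to+\8$ only via the cubic, so $\Psi_a$ is continuous and hence bounded below on $[-a,M]$ with the convention $\Psi_a(-a^+)$ interpreted as a limit, actually $\Phi$ is lower semicontinuous at $0$) and the regime $\xi+a\geq M$ (where $\tfrac16(\xi+a)^3$ dominates $|\xi\log(\xi+a)|+\tfrac12\xi^2$ once $M$ is chosen large). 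A minor bookkeeping subtlety is making the norm equivalence $\|v\|_V\asymp\|v_{xx}\|_{L^3}$ explicit, but this is standard for zero-mean periodic Sobolev functions and already implicitly used throughout the paper.
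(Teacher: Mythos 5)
Your proof is correct and follows essentially the same strategy as the paper's: bound the Hilbert-transform pairing by an at-most-quadratic function of $\|v\|_V$, show that $\F_a$ (resp.\ $\psi$) grows like $\|v_{xx}\|_{L^3}^3$ on the effective domain $\{v_{xx}+a\geq 0\}$, invoke the Poincar\'e-type norm equivalence $\|v\|_V\asymp\|v_{xx}\|_{L^3}$, and conclude that the cubic term wins. The one tidy improvement over the paper's version is your observation that $\F_a=\psi+\tfrac12\|v_{xx}\|_{L^2}^2\geq\psi$, which derives the $\F_a$ limit from the $\psi$ limit a fortiori, whereas the paper proves the $\F_a$ case and then simply declares the $\psi$ case ``analogous''; your explicit pointwise lower bound $\Psi_a(\xi)\geq\tfrac1{12}|\xi|^3-C$ (via the split into bounded and large $\xi+a$) also makes precise the paper's more cursory appeal to ``lower order terms''. (A small slip: you wrote $\xi\log\xi'$ for the logarithmic piece of $\Psi_a$ where it should be $\xi'\log\xi'=(\xi+a)\log(\xi+a)$; this is immaterial since both are $o(\xi^3)$.)
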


\begin{proof}
Note that
\begin{equation}\label{2}
|\lg \H(v),v\rg_{V',V} |\leq \int_I |H(v_{xx})v_x| \dd x \leq \|v_{xx}\|_{L^{3}(I)} 
\|v_x\|_{L^{3/2}(I)}\leq { c}\|v\|_V
\end{equation}
for some $c>0$.
We consider only functions $v\in V$ such that $v_{xx}+a\geq 0$ a.e. (for the remaining $v$,
it holds $\F_a(v)\equiv +\8$ and the thesis is trivial).
Periodicity{, the }zero-average property of functions of $V${, and Poincar\'e inequality,} imply that
$\|v\|_V\ra+\8$ forces $\|v_{xx}\|_{L^3(I)}\ra +\8$ (and $\|v_{xx}+a\|_{L^3(I)}/\|v_{xx}\|_{L^3(I)}\ra 1$). 
The highest order (and the only relevant) term in $\int_I \Phi_a(v_{xx}) \dd x$ is the cubic term, and
$\int_I (v_{xx}+a)^3 \dd x = \|v_{xx}+a\|_{L^3(I)}^3$.
Poincar\'e inequality gives $\|v\|_V\leq \al \|v_{xx}\|_{L^3(I)}$ for some constant
$\al>0$. Since $\lg \H(v),v-v_0\rg_{V',V} $ is at most quadratic in $\|v_{xx}\|_{L^3(I)}$ (as
$\|v\|_V\ra +\8$), it follows { that}
$$\lim_{\|v\|_V\ra +\8} \frac{-\lg \H(v),v-v_0\rg_{V',V}+\F_a(v)}{\|v\|_V}\geq
\lim_{\|v\|_V\ra +\8} \frac{\|v_{xx}+a\|_{L^3(I)}^3+ \text{lower order terms}}{6\al\|v_{xx}\|_{L^3(I)}}=+\8,$$
{ proving  
$$\lim_{\|v\|_V\ra +\8} \frac{-\lg \H(v),v\rg_{V',V}+{\F_a}(v)}{\|v\|_V}=+\8.$$
The proof for
$$\lim_{\|v\|_V\ra +\8} \frac{-\lg \H(v),v\rg_{V',V}+{\psi}(v)}{\|v\|_V}=+\8$$
is analogous.}
\end{proof}

For future reference, given a mapping $v:[0,T]\lra V$,
 with an abuse of notation we will denote by $v(t,\cdot)$ the function $v(t)$. Hence we will
 often write $v(t,x)$ instead of $v(t)(x)$.

\begin{proof}(of Theorem \ref{main})
Lemma \ref{pm} establishes maximal monotonicity for ${ B}$, while Lemma \ref{coercive} 
{ ensures that }\eqref{coe} {holds}, { and} hypothesis {\eqref{id2} results from \eqref{id}}. 
Therefore{,} by Theorem \ref{ru} there exists {a unique} $u:[0,T]\lra V$ such that
$$u\in L^\8(0,T;V)\cap C^0([0,T];U),\qquad u_t\in L^\8(0,T;U),\qquad
u(0)=u^0, $$
and
{\blue
\begin{equation}\label{vi}
\lg u_t,v-u\rg + \lg B(u),v-u\rg_{V',V} + \psi(v)-\psi(u) \geq 0
\end{equation}
for every $v\in V$ and for a.e. $t\in [0,T]$.
Observe that
\begin{align}
\lg &B(u),v-u\rg_{V',V}  + \int_I [\Psi_a(v_{xx})-\Psi_a(u_{xx})]\dd x \notag\\
&= \int_I u_{xx}(v-u)_{xx}\dd x-\lg \H(u),v-u\rg_{V',V}+\int_I\bigg[\Phi_a(v_{xx})-\Phi_a(u_{xx})
-\frac12(v_{xx}^2-u_{xx}^2)\bigg]\dd x,\label{in}
\end{align}
and
$$\frac12\int_I (v_{xx}^2-u_{xx}^2)\dd x = \frac12\int_I (v-u+2u)_{xx}(v-u)_{xx}\dd x
= \int_I u_{xx}(v-u)_{xx}\dd x +\frac12\|(v-u)_{xx}\|_{L^2}^2,$$
hence \eqref{in} becomes
\begin{align*}
\lg B(u),v-u\rg_{V',V} & + \int_I [\Psi_a(v_{xx})-\Psi_a(u_{xx})]\dd x\\
&=
-\lg \H(u),v-u\rg_{V',V}+\int_I[\Phi_a(v_{xx})-\Phi_a(u_{xx})]\dd x-\frac12\|(v-u)_{xx}\|_{L^2}^2.
\end{align*}
Thus the solution $u$ of \eqref{vi1} satisfies also
}
\begin{equation}\label{vi1}
\lg u_t,v-u\rg - \lg \H(u),v-u\rg_{V',V} + \int_I[\Phi_a(v_{xx})-\Phi_a(u_{xx})]\dd x \geq 0 
\end{equation}
{ for every $v\in V$ and for a.e. $t\in [0,T]$.}
We prove that $u$ is also a solution of \eqref{origin} in the weak sense{ of \eqref{3*}},
i.e.{,}
\begin{equation}\label{11*}
\int_0^T \int_I u_t\vph \dd x \dd t = \int_0^T \int_I [H(u_{xx})\vph_{x}{-}\Phi_a'(u_{xx})\vph_{xx}] \dd x \dd t
\end{equation}
{for all $\vph\in C_c^\8((0,T)\times I;\R)$.}
The idea is to test \eqref{vi} (for all $t$ such that it holds) 
with $v=u+\vep(\vph-\bar\vph)$ and $v=u-\vep(\vph-\bar\vph)$, 
where $\bar\vph(t):=\int_I \vph(t,x) \dd x$, take the limit { as}
$\vep\ra0^+$, and integrate in $t$. However 
 it is unclear whether $\Phi_a'(v_{xx})\in L^1(I)$, or
 $v_{xx}+a\geq 0$ a.e. in $x$. 
An {\em ad hoc} construction is required to overcome these difficulties.

\bigskip

{
\noindent{\textbf{Step 1. Integrability of $\log(u_{xx}(t)+a)$.}}
The first step is to prove { that} $\log(u_{xx}(t)+a)\in L^1(I)$ for a.e. $t\in [0,T]$,
and { then show that} $\log(u_{xx}+a)\in L^1(0,T;L^1(I))$.
Fix $\vep\in (0,1)$ and let $v^\vep:=(1-\vep)u(t)$. Using $v^\vep$ in \eqref{vi1}, gives
\begin{align*}
\lg u_t(t),-\vep u(t)\rg - \lg \H(u(t)),-\vep u(t)\rg_{V',V} 
& \geq\int_I \Phi_a(u_{xx}(t))-\Phi_a(v_{xx}^\vep) \dd x\\
&\geq 
 \int_I \vep u_{xx}(t)\Phi_a'((1-\vep)u_{xx}(t)) \dd x , 
\end{align*}
where the last inequality holds since $v_{xx}^\vep=(1-\vep)u_{xx}(t)\geq -(1-\vep)a>-a$,
hence $\Phi_a$ is differentiable in $v_{xx}^\vep(x)$ for a.e. $x\in I${, and also
due to the convexity of $\Phi_a$}.
 By { Lebesgue} monotone convergence { theorem}
\begin{align}
\lg u_t(t),-u(t)\rg - \lg \H(u(t)),- u(t)\rg_{V',V} 
&\geq \lim_{\vep\ra 0^+}\int_I u_{xx}(t)\Phi_a'((1-\vep)u_{xx}(t)) \dd x \notag\\
&=\int_I u_{xx}(t)\Phi_a'(u_{xx}(t)) \dd x.\label{L1ineq}
\end{align}
Note that { for $\xi>-a$,} $\Phi_a'(\xi)=\log(\xi+a)+(\xi+a)^2/2+1$, and { because} $u(t)\in V$,
it follows { that}
\begin{equation}\label{ineq1}
\int_I |u_{xx}(t)(u_{xx}(t)+a)^2|\dd x<+\8,\quad
 \int_{\{u_{xx}(t)+a\geq 1\}} |u_{xx}(t)\log(u_{xx}(t)+a) |\dd x<+\8, 
 \end{equation}
\begin{equation}\label{ineq2}
 \int_{\{u_{xx}(t)\geq -a/2\}} |u_{xx}(t)\log(u_{xx}(t)+a) |\dd x<+\8. 
 \end{equation}
{\magenta Since} $u\in  L^\8(0,T;V)$ and $u_t\in L^\8(0,T;U)$, { we have that}
 $$\lg u_t(t),-u(t)\rg - \lg \H(u(t)),- u(t)\rg_{V',V}<+\8{,} $$ 
which{,} together with} \eqref{L1ineq}{,} \eqref{ineq1}
and \eqref{ineq2}{,} { implies that}
\begin{equation}\label{ineq3}
\int_{J} u_{xx}(t)\log(u_{xx}(t)+a) \dd x<+\8,
\end{equation}
$$J:=\{-a\leq u_{xx}(t)< -a/2\}\cap
\{u_{xx}(t)+a< 1\}.$$
By { definition of $J$, for all $y\in J$} 
$$ u_{xx}(t,y)<0,\quad \log(u_{xx}(t,y)+a)<0,$$
i.e.{,} the integrand
$ u_{xx}(t)\log(u_{xx}(t)+a) $ is nonnegative on $J$.
Since $J\sse\{-a\leq u_{xx}(t)<-a/2\}$, combining with \eqref{ineq3} yields 
 $$\frac{a}2\int_{J} |\log(u_{xx}(t)+a)| \dd x \leq \int_{J} u_{xx}(t)\log(u_{xx}(t)+a) \dd x <+\8, $$
{and so} $\log(u_{xx}(t)+a)\in L^1(I)$.
{ Integrating \eqref{L1ineq}} in time gives
$\log(u_{xx}+a)\in L^1(0,T;L^1(I))$, with 
$$\int_0^T \int_I |u_{xx}(t)(u_{xx}(t)+a)^2|\dd x \dd t <+\8,\quad
\int_0^T \int_{\{u_{xx}(t)+a\geq 1\}} |u_{xx}(t)\log(u_{xx}(t)+a) |\dd x \dd t<+\8 ,$$
 $$\int_0^T\int_{\{u_{xx}(t)\geq -a/2\}} |u_{xx}(t)\log(u_{xx}(t)+a) |\dd x \dd t<+\8 {,} $$
{ and we conclude that} $u\in L^2(0,T;V)$.

\bigskip

\noindent {\textbf{Step 2. Truncating $u_{xx}(t)$.}} To overcome the issue
that for $\vep {>} 0$, $\vph\in C_c^\8((0,T)\times I;\R)$, the function { $u_{xx}+a+\vep\vph_{xx}$
may} fail to be nonnegative, we construct a sequence $\{u_{xx}^\dt\}$ in the following way:
let
\begin{equation}\label{15**}
u_{xx}^\dt(t,x):=\hat{u}_{xx}^\dt(t,x)-{\red\frac1{2\pi}}\int_I \hat{u}_{xx}^\dt(t,s)\dd s,\qquad
\hat{u}_{xx}^\dt(t,x):=\max \{ u_{xx}(t,x)+a,\dt\}-a.
\end{equation}
{ S}etting
\begin{equation}\label{15*}
{ E^{\dt}(t):=}\{x\in I: \hat{u}_{xx}^\dt(t,x)\neq u_{xx}(t,x)\} \qquad \text{for a.e. } t,
\end{equation}
we have
$\|\hat{u}_{xx}^\dt(t)-u_{xx}(t)\|_{L^p(I)}\leq \dt \L^1(E^\dt(t))^{1/p}$, $p\in [1,3]$.
{ Note that
\begin{equation}\label{15***}
\L^1(E^\dt(t)) \to 0 \quad \text{as } \dt\to 0^+.
\end{equation}}
Since $u(t)\in V\sse W_{\loc}^{2,3}(\R)$, $u_x$ is continuous and $2\pi$-periodic, 
i.e. $\int_I u_{xx}(t,x)\dd x =0$ for a.e. $t$. Thus 
\begin{equation}\label{star}
0 = \int_I u_{xx}(t,x)\dd x \leq \int_I \hat{u}_{xx}^\dt(t,x)\dd x \leq \dt\L^1(E^\dt(t)),
\end{equation}
which gives 
\begin{align*}
\|u_{xx}^\dt(t)-u_{xx}(t)\|_{L^p(I)}&\leq
\|u_{xx}^\dt(t)-\hat{u}_{xx}^{{\dt}}(t)\|_{L^p(I)}+\|\hat{u}_{xx}^\dt(t)-u_{xx}(t)\|_{L^p(I)}\\
&\leq {\red2} \dt \L^1(E^\dt(t))^{1/p},\qquad p\in [1,3].
\end{align*}
{Define} 
$${\disp{u_{x}^\dt(t,x):=\int_{-\pi}^x u_{xx}^\dt(t,y) \dd y + u_x(t,-\pi) - \zeta(t,\dt)}},$$
where
$\zeta(t,\dt) $ is a constant chosen such that ${\disp{\int_I u_x^\dt(t,y) \dd y =0}}$.
Since $\|u_{xx}^\dt(t)-u_{xx}(t)\|_{L^1(I)}\leq 2\dt \L^1(E^\dt(t))$,
it follows { that} $|\zeta(t,\dt)|\leq {\red2\dt \L^1(E^\dt(t))}$.\\
Define also 
$${\disp{u^\dt(t,x):=\int_{-\pi}^x u_{x}^\dt(t,y) \dd y + u(t,-\pi) - \theta(t,\dt),}}$$
where
$\theta(t,\dt) $ is a constant chosen such that ${\disp{\int_I u^\dt(t,y) \dd y =0}}$.
Since $\|u_{x}^\dt(t)-u_{x}(t)\|_{L^1(I)}\leq {\red8}\pi \dt \L^1(E^\dt(t))$,
it follows $|\theta(t,\dt)|\leq 8{\red\pi} \dt \L^1(E^\dt(t))$.
With the above construction{, we now have that}
\begin{enumerate}
\item[{(i)}] $u_{xx}^\dt(t)\geq \dt(1-\L^1(E^\dt(t)){\red/2\pi} )-a$, { where we used \eqref{star};}
\item[{(ii)}] $u_{xx}^\dt(t)\in L^3(I)$ with zero-average on $I$, $u_{x}^\dt(t)\in W_{\per_0}^{1,3}(I)$, 
and $u^\dt(t)\in V$
for a.e. $t${;}
\item[{(iii)}] by Poincar\'e { inequality}, periodicity and the zero-average property of functions in $V${,
we observe that}
$$\|u^\dt(t)-u(t) \|_V \leq \beta\|u_{xx}^\dt(t)-u_{xx}(t)\|_{L^3(I)} \leq 
\beta\dt \L^1(E^\dt(t))^{1/3}(1+ \L^1(E^\dt(t))^{2/3})$$
for some constant $\beta>0$.
  \end{enumerate}

\bigskip

\noindent{\textbf{Step 3. Proof of \eqref{11*}.}}
{ This will be accomplished by testing \eqref{vi} with variations of the form $u^\dt(t)\pm \vep\vph(t)$.}
Fix $\vph\in C_c^\8((0,T)\times I;\R)$, and a time $t$ such that \eqref{vi} holds.
 Two cases apply.

\bigskip

Case 1. { Assume that} there exists $\dt_1>0$ such that $\L^1(E^{\dt_1}(t))=0$. { By \eqref{15**}
and \eqref{15***}, we have that}
$\L^1(E^{\dt}(t))=0$ for any $0<\dt{\leq}\dt_1$ and $u^\dt(t)=u(t)$. { Therefore}
$u_{xx}(t)+a\geq \dt_1$. { Choose $\vep_1>0$} such that
$\vep|\vph_{xx}(t)|<\dt_1/2$ for all $\vep\in(0,\vep_1)$. { W}e consider the variation, for
$\vep\in (0,\vep_1)$,
$$w^\vep(t):=u(t)+\vep(\vph(t)-\bar\vph(t)),\qquad \bar\vph(t){:}={\red\frac1{2\pi}}\int_I \vph(t,x)\dd x.$$
Using $w^\vep(t)$ in \eqref{vi} we get
$$\lg u_t(t),w^\vep(t)-u(t)\rg - \lg \H(u),w^\vep(t)-u(t)\rg_{V',V} + \F_a(w^\vep(t))-\F_a(u(t)) \geq 0,$$
that is,
\begin{equation}\label{case1}
\lg u_t(t),\vep(\vph(t)-\bar\vph(t))\rg 
- \lg \H(u),\vep\vph(t)\rg_{V',V} + \F_a(w^\vep(t))-\F_a(u(t)) \geq 0,
\end{equation}
{ where we used the fact that $\lg\H(u),c\rg_{V',V}=0$ for all constants $c\in \R$ (see \eqref{9*})}.
We need to prove
\begin{equation}\label{c1}
\lim_\vep \frac{\F_a(w^\vep(t))-\F_a(u(t))}{\vep}=\int_I \vph_{xx}(t)\Phi_a'(u_{xx}(t))\dd x. 
\end{equation}
Note that, since $\vep<\vep_1$, both
$u_{xx}(t)+a$ and $u_{xx}(t)+a+\vep\vph_{xx}(t)$ are uniformly bounded away from zero.
We observe that
\begin{align*}
\frac{\F_a(u(t))-\F_a(w^\vep(t))}{\vep} &=\frac1\vep
 \int_I \left[\Phi_a(u_{xx}(t))-\Phi_a(u_{xx}(t)+\vep\vph_{xx}(t)) \right] \dd x\\
&\geq -\int_I  \vph_{xx}(t) \Phi_a'(u_{xx}(t)+\vep\vph_{xx}(t))\dd x.
\end{align*}
Clearly $\vph_{xx}(t) \Phi_a'(u_{xx}(t)+\vep\vph_{xx}(t))$ converges
to $\vph_{xx}(t) \Phi_a'(u_{xx}(t))$ a.e.. { N}ote also that 
$$\Phi_a'(u_{xx}(t)+\vep\vph_{xx}(t)) = \log(u_{xx}(t)+\vep\vph_{xx}(t)+a)+(u_{xx}(t)+\vep\vph_{xx}(t)+a)^2/2, $$
with 
$$u_{xx}(t)+\dt_1/2+a\geq u_{xx}(t)+\vep\vph_{xx}(t)+a\geq \dt_1/2$$ 
due to { the choice of $\dt_1,\vep_1>0$}. Thus
$$\log(u_{xx}(t)+\vep\vph_{xx}(t)+a)+(u_{xx}(t)+\vep\vph_{xx}(t)+a)^2/2 \leq
|\log(\dt_1/2)|+ (u_{xx}(t)+\dt_1/2+a)^2/2\in L^1(I), $$
and, by { Lebesgue} dominated convergence theorem, we have
$$\limsup_\vep \frac{\F_a(u(t))-\F_a(w^\vep(t))}{\vep} 
\geq \lim_\vep -\int_I  \vph_{xx}(t) \Phi_a'(u_{xx}(t)+\vep\vph_{xx}(t))\dd x
=-\int_I  \vph_{xx}(t) \Phi_a'(u_{xx}(t))\dd x,$$
{ or equivalently,
$$\liminf_\vep \frac{\F_a(w^\vep(t))-\F_a(u(t))}{\vep} 
\leq\int_I  \vph_{xx}(t) \Phi_a'(u_{xx}(t))\dd x.$$
}
Dividing {\eqref{case1}} by $\vep$
and passing to the limit $\vep\to 0^+$ gives
\begin{align*}
0
&{\red \leq}\lg u_t(t),\vph(t)-\bar\vph(t)\rg 
- \lg \H(u),\vph(t)\rg_{V',V} + \int_I \vph_{xx}(t)\Phi_a'(u_{xx}(t))\dd x.
\end{align*}

\bigskip

{Case 2. Assume {$\L^1(E^{\dt}(t))>0$ for all $\dt>0$}.}
Let $M(\vph):=2\sup_x |\vph_{xx}(t,x)|$,
\begin{equation}\label{23*}
\vep=\vep(\vph,\dt,t):={\dt/(1+M(\vph))} ,\ w^\vep(t):=u^\dt(t)+\vep(\vph(t)
-\bar\vph(t)),\ \bar\vph(t):={\red\frac1{2\pi}}\int_I \vph(t,x) \dd x.
\end{equation}
{ Since $\L^1(E^\dt(t))\to 0$ as $\dt\to 0^+$ {(see \eqref{15***})}, and in view of Step 2 (iii), it follows that}
\begin{equation}\label{13*}
\vep=O(\dt),\qquad \|u^\dt(t)-u(t)\|_V=o(\vep) .
\end{equation}
{Taking} $w^\vep(t)$ in \eqref{vi} yields
\begin{align}
\lg u_t(t), u^\dt(t)-u(t) & +\vep(\vph(t)-\bar\vph(t)) \rg -\lg \H(u(t)), 
u^\dt(t)-u(t)+\vep\vph(t)\rg_{V',V} \notag\\
&+\F_a(u^\dt(t)+\vep\vph(t))-\F_a(u(t))\geq 0, \label{inted}
\end{align}
{By the} mean value theorem, we have
\begin{align*}
\F_a(u^\dt(t)+\vep\vph(t))-\F_a(u(t)) &= \int_I \left[\Phi_a(u_{xx}^\dt(t)
+\vep\vph_{xx}(t))-\Phi_a(u_{xx}(t)) \right]\dd x\\
& = \int_{{S(t,\dt)}} ({ u_{xx}^\dt(t)-u_{xx}(t)}+\vep \vph_{xx}(t))\Phi_a'(\vth^\vep(t)) \dd x,
\end{align*}
where 
{ $$S(t,\dt):= \{u_{xx}^\dt(t)+\vep \vph_{xx}(t)\neq u_{xx}(t)\}$$
and}
\begin{equation}\label{mv}
 \min\{u_{xx}^{\dt}(t,x)+\vep\vph_{xx}(t,x), u_{xx}(t,x) \} \leq \vth^\vep(t,x) 
\leq \max\{u_{xx}^{\dt}(t,x)+\vep\vph_{xx}(t,x), u_{xx}(t,x) \}
\end{equation}
 for any $x$. { Next we establish the Lebesgue measurability of $S(t,\dt)\ni x\mapsto \vth(t,x)$.} 
 %
For $x\in S(t,\dt)$ it holds
 $$\Phi_a(u_{xx}^\dt(t)
+\vep\vph_{xx}(t))-\Phi_a(u_{xx}(t)) =  (u_{xx}^\dt(t)+\vep \vph_{xx}(t)-u_{xx}(t))\Phi_a'(\vth^\vep(t,x)),$$
hence
$$\Phi_a'(\vth^\vep(t,x))=
\frac{\Phi_a(u_{xx}^\dt(t)+\vep\vph_{xx}(t))-\Phi_a(u_{xx}(t)) }{u_{xx}^\dt(t)+\vep \vph_{xx}(t)-u_{xx}(t)}.$$
For $\xi>-a$, $\Phi_a'(\xi)=(\xi+a)^2/2+\log (\xi+a)+1$ is injective, hence we have
$$\vth^\vep(t,x)=(\Phi_a')^{-1}\left(
\frac{\Phi_a(u_{xx}^\dt(t)+\vep\vph_{xx}(t))-\Phi_a(u_{xx}(t)) }{u_{xx}^\dt(t)+\vep \vph_{xx}(t)-u_{xx}(t)}
\right),$$
which proves the Lebesgue measurability of $x\mapsto \vth^\vep(t,x)$ on $S(t,\dt)$.  
\medskip

Dividing by $\vep$ and taking the limit $\dt\ra 0^+$ in \eqref{inted} gives
\begin{align}
  \lg u_t(t), \vph(t)-\bar\vph(t) \rg& -\lg \H(u(t)), 
\vph(t)\rg_{V',V} 
 +  \liminf_\dt\int_{S(t,\dt)}\vph_{xx}(t) \Phi_a'(\vth^\vep(t)) \dd x \notag\\
& +  \frac1\vep \int_{S(t,\dt)}(u_{xx}^\dt(t)-u_{xx}(t)) \Phi_a'(\vth^\vep(t)) \dd x 
\geq 0 ,\label{25}
\end{align}
{ where
 we used the fact that $\|u^\dt(t)-u(t)\|_V=o(\vep)$, and $u$ is Lipschitz in time, and {by \eqref{13*}},
$$\lim_{\dt} \vep^{-1}  \lg u_t(t), u^\dt(t)-u(t) \rg  =0 ,$$
$$\lim_{\dt} \vep^{-1}  |\lg \H(u(t)), u^\dt(t)-u(t)\rg_{V',V} | 
\leq C \lim_\vep\vep^{-1}\|u_{xx}(t)\|_{L^{3}(I)} \|u^\dt(t)-u(t)\|_{V}=0 $$
for some $C>0$.}

\bigskip
{We claim that
\begin{equation}\label{Claim 1}
\lim_\dt \frac1\vep \int_{S(t,\dt)}(u_{xx}^\dt(t)-u_{xx}(t)) \Phi_a'(\vth^\vep(t)) \dd x =0.
\end{equation}
Note that on $I\backslash E^\dt(t)$ it holds $u_{xx}+a\geq \dt$, hence { by \eqref{15**}
and \eqref{star}}, we have
\begin{align*}
\vth^\vep(t)+a &\geq
 \min\{u_{xx}^\dt(t)+\vep\vph_{xx}(t), u_{xx}(t)\}+a\\ 
 &\geq u_{xx}(t)+a-\dt(1/2+ \L^1(E^\dt(t)){\red/2\pi})
 \geq (u_{xx}(t)+a)/3,
 \end{align*}
 {\red for all $\dt$ such that $\L^1(E^\dt(t))\leq \pi/3$, and}
\begin{align*}
\vth^\vep(t)+a &\leq  \max\{u_{xx}^\dt(t)+\vep\vph_{xx}(t), u_{xx}(t)\}+a 
\leq u_{xx}(t)+a+1
 \end{align*}
 {\red for all $\dt\leq 3/2$.}
 Hence 
 \begin{align}
 |\Phi_a'&(\vth^\vep(t))|\notag\\
 &\leq |\log(u_{xx}(t)+a)|+|\log(u_{xx}(t)+a+1)|+\log 3 
 +(u_{xx}(t)+a+1)^2=:g(t)\in L^1(I).
 \label{L1}
 \end{align}
By {\eqref{15**} and \eqref{star}}, 
on $I\backslash E^\dt(t)$ it holds 
\begin{equation}\label{27*}
|u_{xx}^\dt(t)-u_{xx}(t)|\leq \dt \L^1(E^\dt(t)),
\end{equation}
hence
$$\frac1\vep \int_{(I\backslash E^\dt(t))\cap S(t,\dt)}|(u_{xx}^\dt(t)-u_{xx}(t)) \Phi_a'(\vth^\vep(t))| \dd x
\leq 2\L^1(E^\dt(t))\|g(t) \|_{L^1(I)}
\to 0,$$
{ where we have used the definition of $\vep$ as in \eqref{23*}.}
On $E^\dt(t)$ it holds 
$$u_{xx}^\dt(t)+\vep\vph_{xx}(t)+a \geq \dt{\red\bigg(\frac12 - \frac{\L^1(E^\dt(t))}{2\pi}\bigg) }
\geq (u_{xx}+a)/{\red3}, $$
hence
$$\vth^\vep(t)+a \geq
 \min\{u_{xx}^\dt(t)+\vep\vph_{xx}(t), u_{xx}(t)\}+a \geq (u_{xx}+a)/{\red3},$$
thus \eqref{L1} still holds. 
Since $|u_{xx}^\dt(t)-u_{xx}(t)|\leq \dt{\red\big(\frac12 + \frac{\L^1(E^\dt(t))}{2\pi}\big) }$, we have,
{\red under the additional assumption $\dt\leq 3/5$,} 
\begin{equation*}
\frac1\vep \int_{ E^\dt(t)\cap S(t,\dt)}  |(u_{xx}^\dt(t)-u_{xx}(t))\Phi_a'(\vth^\vep(t))| \dd x
 \leq 2\|g(t)\|_{L^1(E^\dt(t))}
\to0,
\end{equation*}
and \eqref{Claim 1} is proven.


Now we {\magenta show that}
\begin{equation}\label{Claim 2}
\lim_\dt  \int_{S(t,\dt)}\vph_{xx}(t)\Phi_a'(\vth^\vep(t)) \dd x =
\int_I \vph_{xx}(t)\Phi_a'(u_{xx}(t)) \dd x.
\end{equation}
By definition of $S(t,\dt)$, { we have}
$$I\backslash S(t,\dt)=\{x:u_{xx}^\dt(t,x)+\vep\vph_{xx}(t,x)-u_{xx}(t,x)=0\},$$ 
thus for any $\dt$ it holds
\begin{align}
0&=\frac1\vep\int_{I\backslash S(t,\dt)} [\Phi_a(u_{xx}^\dt(t)+\vep \vph_{xx}(t))-\Phi_a(u_{xx}(t))] \dd x \notag\\
&= \frac1\vep\int_{I\backslash S(t,\dt)} (u_{xx}^\dt(t)+\vep\vph_{xx}(t)-u_{xx}(t)) \Phi_a'(u_{xx}(t))\dd x.\label{n}
\end{align}
From the construction of $u_{xx}^\dt$, we get
\begin{align*}
\int_{I\backslash S(t,\dt)} &\left|\frac{u_{xx}^\dt(t,x)-u_{xx}(t,x)}{\vep} \Phi_a'(u_{xx}(t))\right| \dd x\\
& = \int_{(I\backslash S(t,\dt))\cap (I\backslash E^\dt(t))} 
\left|\frac{u_{xx}^\dt(t,x)-u_{xx}(t,x)}{\vep} \Phi_a'(u_{xx}(t))\right| \dd x\\
&\quad+\int_{(I\backslash S(t,\dt))\cap E^\dt(t)} 
\left|\frac{u_{xx}^\dt(t,x)-u_{xx}(t,x)}{\vep} \Phi_a'(u_{xx}(t))\right| \dd x\\
& \leq {\red(1+M(\vph))}\Big( \frac{\L^1(E^\dt(t))}{2\pi}\|\Phi_a'(u_{xx}(t))\|_{L^1(I)}+
\Big(1+\frac{\L^1(E^\dt(t))}{2\pi} \Big)
\|\Phi_a'(u_{xx}(t))\|_{L^1(E^\dt(t))}\Big)\to 0,
\end{align*}
{ where we used \eqref{27*} and the fact that on $E^\dt(t)$, $|u_{xx}^\dt-u_{xx}|=O(\dt)$ (see
\eqref{mv}).}

This, together with \eqref{n}, gives
${\disp{\int_{I\backslash S(t,\dt)} \vph_{xx}(t)\Phi_a'(u_{xx}(t)) \dd x \to 0}}$.
Let
$$\tilde\vth^\vep(t,x):=
\left\{\begin{array}{cl}
\vth^\vep(t,x) & \text{if } x\in S(t,\dt),\\
u_{xx}(t,x) & \text{if } x\notin S(t,\dt).
\end{array} \right.$$
Hence 
\begin{align*}
\lim_\dt  \int_{S(t,\dt)}\vph_{xx}(t)\Phi_a'(\vth^\vep(t)) \dd x  &= 
\lim_\dt  \int_{S(t,\dt)}\vph_{xx}(t)\Phi_a'(\vth^\vep(t)) \dd x +
\lim_\dt   \int_{I\backslash S(t,\dt)}\vph_{xx}(t)\Phi_a'(u_{xx}(t)) \dd x\\
  &=\lim_\dt \int_{I}\vph_{xx}(t)\Phi_a'(\~\vth^\vep(t)) \dd x,
  \end{align*}
thus \eqref{Claim 2} is equivalent to { proving that}
\begin{equation}\label{*}
\lim_\dt  \int_{I}\vph_{xx}(t)\Phi_a'(\~\vth^\vep(t)) \dd x =
\int_I \vph_{xx}(t)\Phi_a'(u_{xx}(t)) \dd x.
\end{equation}
By construction, $u_{xx}^\dt(t)+\vep\vph_{xx}(t)\to u_{xx}(t)$ a.e., hence
$\~\vth^\vep(t)\to u_{xx}(t)$ a.e. { Therefore, \eqref{*} follows from \eqref{L1}
and Lebesgue dominated convergence theorem.}
} 


In view of \eqref{Claim 1} and \eqref{Claim 2}, passing to the limit $\dt\to 0^+$ in \eqref{25} we get
%
$$ \int_I  u_t(t)(\vph(t) -\bar\vph(t))\dd x \geq  
\int_I \big[H(u_{xx}(t))\vph_x(t)-\Phi_a'(u_{xx}(t))\vph_{xx}(t)\big] \dd x. $$
 The above argument can be repeated for any $t$ in 
$$\{t\in (0,T): \eqref{vi}\text{ holds, }  \log (u_{xx}(t)+a)\in L^1(I),\ u(t)\in V\}, $$
which has full measure, yielding
$$\int_I  u_t(t)(\vph(t)-\bar\vph(t)) \dd x  \geq
 \int_I \big[H(u_{xx}(t))\vph_x(t)-\Phi_a'(u_{xx}(t))\vph_{xx}(t)\big] \dd x  
 \qquad \text{for a.e. } t.$$
Integrating in time gives
\begin{equation}\label{ge pre}
\int_0^T \int_I  u_t(t)(\vph(t)-\bar\vph(t)) \dd x  \dd t \geq \int_0^T
 \int_I \big[H(u_{xx}(t))\vph_x(t)-\Phi_a'(u_{xx}(t))\vph_{xx}(t)\big] \dd x \dd t .
 \end{equation}
Since $u$ is Lipschitz in time and $\vph$ { is} smooth, we have sufficient regularity
to integrate by parts, hence
$$\int_0^T \int_I u_t(t) \bar\vph(t) \dd x \dd t 
=-\int_0^T \bar\vph_t(t)\Big( \int_I u(t) \dd x\Big) \dd t =0,$$
and \eqref{ge pre} { becomes}
\begin{equation}\label{ge}
\int_0^T \int_I  u_t(t)\vph(t) \dd x  \dd t \geq \int_0^T
 \int_I \big[H(u_{xx}(t))\vph_x(t)-\Phi_a'(u_{xx}(t))\vph_{xx}(t)\big] \dd x \dd t .
 \end{equation}
{ Replacing $\vph$ with $-\vph$ in \eqref{ge}, we conclude \eqref{11*}.}
\end{proof}

\bigskip
%

\section*{Acknowledgements}
The authors warmly thank the Center for Nonlinear Analysis (NSF PIRE Grant No.
OISE-0967140), where part of this research was carried 
out. The research of I. Fonseca was partially funded by the National Science
Foundation under Grant No.
DMS-0905778, DMS-1411646, and that of G. Leoni under Grant No. DMS-1007989,
DMS-1412095. The research of X.Y. Lu was supported 
by the Funda\c{c}$\tilde{\text{a}}$o para a Ci\^{e}ncia e a Tecnologia (Portuguese Foundation for
Science and Technology) through the
Carnegie Mellon--Portugal Program under Grant SFRH/BD/35695/ 2007.

{\small

\thebibliography{99}

\bibitem{B}
{Browder, F.E.} (1965).  Multivalued monotone nonlinear mappings and duality mappings
in Banach spaces. {\em Transactions of the American Mathematical Society} 118:338--351.

\bibitem{BN}
{Butzer, P.L.}, {Nessel, R.J.} (1971). {\em Fourier analysis and 
approximation. Volume 1: one-dimensional theory}. New York and London: Academic Press.

\bibitem{DFL}
{Dal Maso, G., Fonseca, I.}, {Leoni, G.} (2014).
{Analytical validation of a continuum model for epitaxial growth with elasticity on vicinal surfaces}.
{\em Archives for Rational Mechanics and Analysis} 212:1037-1064.

\bibitem{DPV}
{Duport, C., Politi, P.}, {Villain, J.} (1995).
{Growth instabilities induced by elasticity in a vicinal surface}. {\em Journal de
Physique I} 1(5):1317--1350.

\bibitem{HLP}
{Hardy, G.H., Littlewood, J.E., P\'olya, G.} (1988). {\em Inequalities}. Cambridge: Cambridge Mathematical Library.

\bibitem{Kacur}
{Ka\v{c}ur, J.} (1985). {\em Method of Rothe in evolution equations}.
Leipzig: Teubner Verlaggesellschaft.


\bibitem{TPZL}
{Tersoff, J., Phang, Y.H., Zhang, Z.}, {Lagally, M.G.} (1995).
{Step-bunching instability of vicinal surfaces
under stress}. {\em Physical Review Letters} 75:2730--2733.

\bibitem{X}
{Xiang, Y.} (2002). {Derivation of a continuum model for epitaxial growth with elasticity on vicinal surface}.
{\em SIAM Journal on Applied Mathematics} 63:241-258.

\bibitem{XE}
{Xiang, Y.}, {E, W.} (2004). {Misfit 
elastic energy and a continuum model for epitaxial growth with elasticity on
vicinal surfaces}. {\em Physical Review B} 69:035409-1--035409-16.

\bibitem{XX}
{Xu, H.}, {Xiang, Y.} (2009). {Derivation of a 
continuum model for the long-range elastic interaction on stepped
epitaxial surfaces in 2+1 dimensions}. {\em SIAM Journal on Applied Mathematics} 69(5):1393--1414.

}

\end{document}